\documentclass[11pt]{amsart}

\let\oldmarginpar\marginpar
\renewcommand\marginpar[1]{\-\oldmarginpar[\raggedleft\footnotesize #1]%
{\raggedright\footnotesize #1}}

\usepackage{savesym}
\usepackage{newtxtext,newtxmath}
\restoresymbol{TXF}{Bbbk}

\usepackage{tikz-cd}
\usepackage{quiver}
\usepackage{amsmath}
\usepackage{amsthm}
\usepackage{xspace}
\usepackage[all,tips]{xy}
\usepackage{syntonly}

\usepackage{arydshln}
\usepackage{todo}
\usepackage{enumerate}
\usepackage{graphicx}
\usepackage{fancyhdr}
\usepackage{graphics}
\usepackage{setspace}
\usepackage[colorlinks=true,linkcolor=blue,citecolor=blue]{hyperref}

\theoremstyle{plain}
\newtheorem{thm}{Theorem}[section]

\newtheorem{prop}[thm]{Proposition}
\newtheorem{lemma}[thm]{Lemma}
\newtheorem{ques}{Question}

\theoremstyle{definition}
\newtheorem{rmk}[thm]{Remark}
\newtheorem{defn}[thm]{Definition}

\newtheorem{ex}[thm]{Example}

\newcommand{\Z}{\mathbb{Z}}
\newcommand{\N}{\mathbb{N}}
\newcommand{\Q}{\mathbb{Q}}
\newcommand{\stab}{\mbox{Stab}}

\title{Grothendieck Rigidity and virtual retraction of higher-rank GBS groups}

\author{Daxun Wang}
\address{D. Wang: Yau Mathematical Sciences Center, Tsinghua University, Beijing, China}
\email{wangdaxun@mail.tsinghua.edu.cn}

\keywords{Grothendieck rigidity, virtual retraction, generalized Baumslag-Solitar groups.}

\subjclass[2000]{20E06, 20E18, 20E26.}

\begin{document}

\begin{abstract}
A rank $n$ generalized Baumslag-Solitar group ($GBS_n$ group) is a group that splits as a finite graph of groups such that all vertex and edge groups are isomorphic to $\Z^n$. This paper investigates Grothendieck rigidity and virtual retraction properties of $GBS_n$ groups. We show that every residually finite $GBS_n$ group is Grothendieck rigid. Furthermore, we characterize when a $GBS_n$ group satisfies property (VRC), showing that it holds precisely when the monodromy is finite.

\end{abstract}

\maketitle

\section{Introduction}
Let $u:H \rightarrow  G$ be a homomorphism  between two finitely generated residually finite groups. Then $u$ induces a homomorphism $\hat{u} : \widehat{H}\rightarrow \widehat{G}$ between the profinite completions. In \cite{Gro70}, Grothendieck asked the following question.

\begin{ques}\label{ques: question 1}
Let $u: H\hookrightarrow G$ be an embedding between two finitely presented residually finite groups such that the induced homomorphism $\hat{u}:\widehat{H}\rightarrow \widehat{G}$ is an isomorphism. Does it follow that $u$ is an isomorphism?
\end{ques}

We assume that all groups are finitely generated and residually finite unless otherwise stated.
We say that $(G,H)$ is a \textit{Grothendieck pair} if $H$ is
a proper subgroup of $G$ and the induced homomorphism $\hat{u}$ is an isomorphism. Moreover, we say that $G$ is \textit{Grothendieck rigid} if for any proper subgroup $H$ of $G$, $(G,H)$ is not a Grothendieck pair. 

In \cite{PT90}, Platonov and Tavgen' constructed the first example of Grothendieck pairs $(G,H)$ consisting of finitely generated (but infinitely presented) residually finite groups. Later, Bridson and Grunewald gave an
example where both groups were finitely presented in \cite{BG04}. 

When restricted to 3-manifold groups, Sun showed that all finitely generated 3-manifold groups are Grothendieck rigid \cite{Sun23}. The proof relies on analyzing subgroup separability and graph of groups structures of $G$ and its subgroup $H$. For non-separable subgroups $H$, there exists a special free subgroup $H_0$ of a vertex group, whose normalizer has finite index in $H$ but infinite index $G$. This index mismatch persists in the profinite completions, preventing an isomorphism between them. 

In this paper, we also restrict our attention to the class of finitely generated groups that are equipped with a graph of groups structure. In particular, we consider the class of groups that split as a finite graph of groups such that all vertex and edges groups are isomorphic to $\Z^n$. These groups are also called \textit{rank $n$ generalized Baumslag-Solitar groups ($GBS_n$ groups)}. We remark that, since the underlying graphs of these groups are finite, each $GBS_n$ group is finitely generated and hence finitely presented. $GBS_n$ groups have been studied in many aspects such as hierarchically hyperbolicity \cite{J25}, subgroup separability \cite{GBSn} and $C^{\ast}$-simplicity \cite{XWY25}. It is known that a $GBS_n$ group is residually finite if and only if it is either a strictly ascending HNN-extension of $\Z^n$ or virtually $\Z^n$-by-free (see \cite[Theorem 3.1]{GBSn}). We note that, for all ascending HNN-extensions of finitely generated free groups, Jaikin-Zapirain and Lubotzky showed that they are Grothendieck rigid \cite{ZL25}. In this paper, we showed that every strictly ascending HNN-extension of $\Z^n$ is Grothendieck rigid (see Proposition \ref{prop: HNN GR}). As an application, we have the following result.

\begin{thm}\label{thm: Main theorem 1}
Every residually finite $GBS_n$ group is Grothendieck rigid.
\end{thm}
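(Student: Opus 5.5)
By \cite[Theorem 3.1]{GBSn}, a residually finite $GBS_n$ group $G$ is either a strictly ascending HNN-extension of $\Z^n$ or virtually $\Z^n$-by-free. I will treat these two cases separately. The first case is exactly Proposition~\ref{prop: HNN GR}, so all the work is in the second case. There $G$ has a finite-index subgroup $G_0\cong \Z^n\rtimes F$ with $F$ free. I first reduce Grothendieck rigidity to a finite-index subgroup. Suppose $(G,H)$ is a Grothendieck pair. Put $H_0=H\cap G_0$. Since $\widehat H\to\widehat G$ is an isomorphism, the closure of $H$ is all of $\widehat G$, so $H$ meets every coset of $G_0$. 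Hence $[H:H_0]=[G:G_0]$, and $\widehat{H_0}\to\widehat{G_0}$ is again an isomorphism; the profinite topology on $H_0$ is induced because $H_0$ has finite index in $H$. Since $H\neq G$ and $HG_0=G$, we get $H_0\neq G_0$. It therefore suffices to show that $G_0=\Z^n\rtimes F$ is Grothendieck rigid.

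Now let $H\le G_0=N\rtimes F$ with $N\cong\Z^n$ normal, and assume $\widehat H\to\widehat{G_0}$ is an isomorphism. The plan has three steps.

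\emph{Step 1 (abelianisation).} The isomorphism induces isomorphisms on the profinite completions of all finite quotients. In particular the images of $H$ and $G_0$ in every finite quotient coincide, so $H$ is dense in the profinite topology of $G_0$.

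\emph{Step 2 (intersection with $N$ and the image in $F$).} Let $p:G_0\to F$ be the projection. The group $p(H)$ is a finitely generated subgroup of the free group $F$; it is finitely generated because $H$ is finitely generated, which holds since Grothendieck pairs require it. It is dense in the profinite topology of $F$. By M.~Hall's theorem, finitely generated subgroups of free groups are closed in the profinite topology, so $p(H)=F$.

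\emph{Step 3 (the kernel).} Next consider $H\cap N$. Every subgroup of $\Z^n$ is separable in $G_0$, since $\Z^n$-by-free groups are residually finite and $N$ is closed. The closure of $H\cap N$ in $\widehat{G_0}$ should equal $\overline N\cong\widehat{\Z}^n$. This should follow from the exact sequence $1\to H\cap N\to H\to F\to 1$ together with exactness of profinite completion for $\Z^n\rtimes F$: free groups are good, so the kernel completes correctly. Comparing this with $1\to\widehat N\to\widehat{G_0}\to\widehat F\to1$, I get $\widehat{H\cap N}\cong\widehat N$. For abelian groups, a finite-index subgroup with the same completion is the whole group, and an infinite-index subgroup has strictly smaller rank. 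Hence $H\cap N=N$, and together with $p(H)=F$ this gives $H=G_0$.

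The main obstacle is Step 3. One must check that the completion of $H$ really restricts to $\widehat{H\cap N}$ on the kernel, i.e.\ that the induced topology on $H\cap N$ from $H$ is the full profinite topology. I would argue this using the splitting $H=(H\cap N)\rtimes s(F)$, where $s$ is a section obtained since $F$ is free and $p(H)=F$. For such semidirect products with $F$ free, one can lift finite quotients of $N$ invariant under the (finite-index) stabilizer, by the standard Grunewald--Segal style argument. If that fails, I would fall back on Sun's index-mismatch method: compare the normalizer of $H\cap N$ in $H$ versus in $G_0$ via Proposition-level separability results of \cite{GBSn}.
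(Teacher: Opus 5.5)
Your argument is correct, and in the second case it takes a genuinely different route from the paper. The paper never works inside $\Z^n\rtimes F$. It quotes \cite{GBSn} for the fact that a residually finite $GBS_n$ group that is not strictly ascending is LERF. It then applies \cite{Sun23}: a finitely generated $H$ with $\overline H=\widehat G$ is closed in the profinite topology, so $H=G$.

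You instead use only the weaker structural statement (virtually $\Z^n$-by-free) and pass to $G_0=N\rtimes F$. Your reduction is sound: density gives $HG_0=G$, and comparing indices of closures gives $\widehat{H_0}\cong\widehat{G_0}$. You then force $p(H)=F$ with M.~Hall's theorem and handle the kernel by comparing the two extensions. This gives a self-contained proof, modulo M.~Hall, that every virtually $\Z^n$-by-free group is Grothendieck rigid. It avoids separating arbitrary finitely generated subgroups: in effect you run the LERF argument only for the one dense subgroup $H$, where density removes the need for any cocycle-extension step. The paper's route is two lines, but it relies entirely on the separability theorem of \cite{GBSn}.

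Step 3 can be tightened so that your ``main obstacle'' disappears.
\begin{enumerate}
\item Since $p|_H$ is onto, right exactness of profinite completion (no goodness needed) gives $\ker(\widehat H\to\widehat F)=\overline{K}$ for $K=H\cap N$.
\item Because $\widehat p\circ\widehat i=\widehat{p|_H}$ and $\widehat i$ is bijective, it follows that $\overline K=\overline N$ in $\widehat{G_0}$.
\item Now use separability of $K$ in $G_0$, but not with your justification: ``residually finite and $N$ closed'' does not imply it. Instead, for $g\in N\setminus K$ choose a finite-index $L\le N$ with $K\le L$ and $g\notin L$.
\item The stabiliser $\Gamma\le F$ of $L$ has finite index, since $F$ permutes the finitely many subgroups of $N$ of index $[N:L]$. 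So $L\rtimes\Gamma$ is a finite-index subgroup of $G_0$ containing $K$ but not $g$, which contradicts $g\in\overline N=\overline K$. Hence $K=N$.
\end{enumerate}

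If you prefer your ending via $\widehat K\cong\widehat N$, two points need care. First, ``same completion'' must mean that the map induced by $K\hookrightarrow N$ is an isomorphism, since abstractly $\widehat{2\Z^n}\cong\widehat{\Z^n}$. Second, identifying $\overline K$ and $\overline N$ with $\widehat K$ and $\widehat N$ is exactly the splitting/stabiliser argument you sketch. That argument does work, because $K$ and $N$ are finitely generated. The fallback to Sun's index-mismatch method is unnecessary.
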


Next, we study property (VRC) for  $GBS_n$ groups. Recall that if $G$ is a group, a subgroup $H$ is a \textit{virtual retract} of $G$, denoted $H\leq_{vr} G$, if there is a finite index subgroup $K$ of $G$ with $H\leq K$ and a homomorphism $\phi: K \rightarrow H$ which is the identity map when restricted to $H$. A group $G$ has \textit{property (VRC)}  if every cyclic subgroup is a virtual retract of $G$. Property (VRC) is very useful for studying the profinite topology on groups. It is known that a virtual retract of a residually finite group is closed in the profinite topology (see \cite[Lemma 2.2]{Min21}). Therefore, property (VRC) implies that the group is cyclic
subgroup separable. Property (VRC) have been studied in particular in the class of groups that are equipped with a graph of free abelian groups structure (see \cite{Min21}, \cite{UM251},\cite{UM252},\cite{WY25}). As for $GBS_n$ groups, we have the following result.

\begin{thm}\label{thm: Main theorem 2}
Let $G$ be a $GBS_n$ group. Then $G$ has (VRC) if and only if it has finite monodromy.
\end{thm}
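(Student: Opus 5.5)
We prove the two directions separately. First we fix notation. Let $G=\pi_1(\mathcal{G},\Gamma)$ with all vertex and edge groups $\Z^n$. Each edge $e$ has two injections $\alpha_e,\omega_e:\Z^n\to G_{v}$. Each such injection is given by an integer matrix of nonzero determinant, so it extends uniquely to a rational isomorphism of $\Q^n$. Fix a base vertex $v_0$. Every loop in $\Gamma$ based at $v_0$ composes these rational maps into an element of $GL_n(\Q)$. The \emph{monodromy} is the resulting homomorphism $\rho:G\to GL_n(\Q)$, obtained by composing $G\to\pi_1(\Gamma)$ with this loop assignment. Finite monodromy means $\rho(G)$ is finite.

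\emph{(VRC) implies finite monodromy.} We argue by contrapositive and assume $\rho(G)$ is infinite. Then some loop $t$ has $\rho(t)=A$ of infinite order. We test the cyclic subgroups $\langle g\rangle$ with $g\in G_{v_0}$, viewing $G_{v_0}\otimes\Q=\Q^n$.

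Suppose $\langle g\rangle\le_{vr}G$, via a finite index $K$ and a retraction $\phi:K\to\langle g\rangle$. Pass to a power $t^k\in K$ and to a finite index sublattice $L\le G_{v_0}\cap K$. Then $t^k$ conjugates a finite index sublattice of $L$ into $L$, acting rationally by $A^k$. Composing $\phi$ with the quotient map gives a homomorphism $L\to\Z$ that is nonzero on $g$. It extends to a linear functional $f$ on $\Q^n$, and $f$ must be $A^k$-invariant, i.e.\ $f\circ A^k=f$ on $\Q^n$.

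So if every cyclic subgroup were a virtual retract, then for each $g\in\Q^n$ some functional $f$ invariant under some power of $A$ would satisfy $f(g)\ne0$. This forces the invariant functionals of powers of $A$ to separate all points. Since only countably many powers occur, a Baire/union-of-subspaces argument over $\Q^n$ gives a single $k$ with $A^k$ acting trivially on the dual, so $A^k=I$. This contradicts $A$ having infinite order. The step needing most care is making the choice of $k$ uniform across all $g$. We handle it by noting that the fixed subspaces of $A^{k}$ increase along divisibility, and that a countable union of proper subspaces cannot cover $\Q^n$ once we pick $g$ generic, i.e.\ outside all proper fixed-functional annihilators.

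\emph{Finite monodromy implies (VRC).} Assume $\rho(G)$ is finite. Pass to the finite index subgroup $\ker\rho$, using the fact that (VRC) passes to finite index overgroups. The standard argument: if $H\le_{vr}K\le_f G$, then $H\le_{vr}G$ by restricting to $K$. We may therefore assume trivial monodromy. We then aim to show that $G$ is virtually $\Z^n\times F$ with $F$ free.

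With trivial monodromy, all edge maps glue to a consistent identification of every $G_v\otimes\Q$ with $\Q^n$. Take the intersection $N$ of all conjugates of $G_{v_0}$ inside $\Q^n$. Finite monodromy and commensurability give that $N$ is a finite index sublattice of every vertex group, and that $N$ is normal. The quotient $G/N$ is then a graph of finite groups, hence virtually free. A finite index torsion-free, and hence free, subgroup $F$ of $G/N$ gives an extension $1\to N\to\tilde F\to F\to1$. This extension splits because $F$ is free. Since the monodromy is trivial, the conjugation action of $F$ on $N$ is trivial, so $\tilde F\cong\Z^n\times F$.

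Finally, $\Z^n\times F$ has (VRC), because products of abelian and free groups do. For a cyclic $\langle(a,w)\rangle$, one retracts by projecting onto a suitable cyclic direct factor in $\Z^n$, or onto a free factor of a finite index subgroup of $F$ via Marshall Hall's theorem, after passing to finite index. The main obstacle is the uniformity argument in the first direction.
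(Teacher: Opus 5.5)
\paragraph{Forward direction.} Your forward direction takes a genuinely different and more elementary route than the paper. The paper argues (VRC) $\Rightarrow$ residually finite $\Rightarrow$ LERF, ruling out strictly ascending HNN extensions via Minasyan. It then uses \cite{RV96} to get a normal $N\cong\Z^n$, shows $N\le_{vr}G$, and invokes \cite[Lemma 4.5]{UM251} to get finite image in $\mathrm{Out}(N)$.

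Your key observation avoids all of this. A retraction $K\to\langle g\rangle$ with $g\in G_{v_0}$ yields a functional $f$ on $\Q^n$ with $f(g)\neq 0$ and $f\circ A^k=f$, because the target is abelian. This handles the ascending HNN case automatically, and it uses (VRC) only for cyclic subgroups of one vertex group.

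The uniformity step, however, is misstated. $\Q^n$ is countable, so for $n\ge 2$ it \emph{is} a countable union of proper subspaces, and no Baire-type argument is available. Also, the $g$ you need lies \emph{inside} the annihilators, not outside them. Your divisibility remark alone finishes the argument:
\begin{itemize}
\item Let $W_k=\mathrm{Im}(A^k-I)$, the common kernel of all $A^k$-invariant functionals.
\item If $k\mid k'$ then $W_{k'}\subseteq W_k$. Hence the chain $W_{j!}$ stabilizes at some $W_K$ with $W_K\subseteq W_k$ for every $k$.
\item $W_K\neq 0$ because $A^K\neq I$.
\item Any nonzero $g\in W_K\cap G_{v_0}$ is killed by every functional invariant under some power of $A$, so $\langle g\rangle$ is not a virtual retract.
\end{itemize}
You also need Schur's theorem to produce $t$ with $\rho(t)$ of infinite order. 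Alternatively, note that $f$ is invariant under all of $\rho(K)$ and run the same directedness argument over finite index subgroups of $\rho(G)$.

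\paragraph{Converse: first gap.} Your ``standard argument'' only shows that virtual retracts of $\ker\rho$ are virtual retracts of $G$. For $g\notin\ker\rho$ it yields $\langle g^m\rangle\le_{vr}G$, not $\langle g\rangle\le_{vr}G$. Upgrading needs an extra step, which is exactly \cite[Lemma 5.2]{Min21} as cited in the paper. For example, let $r:K\to\langle g^m\rangle\cong\Z$ be the retraction and put $L=\bigcap_{i<m}g^iKg^{-i}$. Then $\theta(x)=\sum_{i<m}r(g^{-i}xg^{i})$ is $g$-invariant on $L$ with $\theta(g^m)=m$, and $xg^j\mapsto\theta(x)+j$ retracts $L\langle g\rangle$ onto $\langle g\rangle$.

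\paragraph{Converse: second gap.} More seriously, the claim that $N$ has finite index in $G_{v_0}$ is the whole content of this direction; the paper obtains it from \cite[Theorem 3.4]{J25}. ``Finite monodromy and commensurability'' does not prove it.

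Intersecting ``inside $\Q^n$'' even computes the wrong object, because vertex groups do not embed jointly into $\Q^n$. In $\langle x,y\mid x^2=y^2\rangle$, which has trivial monodromy, $x$ and $y$ have the same image in $\Q$. Every conjugate of $\langle x\rangle$ has the same lattice, yet the core of $\langle x\rangle$ is $\langle x^2\rangle$.

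A correct argument runs as follows:
\begin{itemize}
\item With trivial monodromy, the lattice $\Lambda_e\subset\Q^n$ of an edge stabilizer of $T$ depends only on the $G$-orbit of $e$.
\item Hence $\Lambda=\bigcap_e\Lambda_e$ is a finite intersection of lattices.
\item Fix $c\in\Lambda$ and an edge $e=[v,w]$. The unique elements of $G_v$ and $G_w$ representing $c$ both equal the element of $G_e$ representing $c$.
\item By connectedness of $T$, all these representatives are a single element of $G$ fixing $T$.
\item So the kernel of the action contains a finite index subgroup of $G_{v_0}$.
\end{itemize}
With this in place, your remaining steps go through: the virtually free quotient, the splitting, the trivial action, and (VRC) for $\Z^n\times F$ via M.~Hall.
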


\section{Grothendieck rigidity}

\subsection{Profinite completion}

For a group $G$, let $\mathcal{N}$ be the set of all normal subgroups with finite index in $G$. Then $\mathcal{N}$ forms an inverse system ordered by inclusion. We define the \textit{profinite completion $\widehat{G}$} of $G$ as the inverse limit $$\widehat{G}=\mathop{\lim_{\longleftarrow}}_{N\in \mathcal{N}} G/N$$
There is a natural homomorphism $\iota_G: G\rightarrow \widehat{G}$ given by $g\mapsto (gN)_{N\in \mathcal{N}}$. This homomorphism is not injective in general. In fact, $\iota_G$ is injective if and only if $G$ is residually finite. By \cite[Corollary 1.2.28]{Wilkes24}, the map $\iota_G$ has dense image in $\widehat{G}$.

\subsection{Higher-rank GBS groups}
In this subsection, we study Question \ref{ques: question 1} for $GBS_n$ groups. First, we show that a strictly ascending HNN-extension of $\Z^n$ is Grothendieck rigid. Next, as an application, we derive that every residually finite $GBS_n$ group is Grothendieck rigid.

Let $G$ be a group that has the following presentation: 
$$G=\langle A, t\ |\ tat^{-1}= \phi(a), \forall a\in A \rangle$$ 
where $A=\Z^n$, and $\phi:\Z^n\rightarrow \Z^n$ is given by left multiplication by a matrix $\mathscr{M}\in M_{n\times n}(\Z)$ such that $|\mbox{det} \mathscr{M}|>1$. The group $G$ is residually finite by \cite[Theorem 3.1]{GBSn}. For each $l\in \N$, we define the subgroup $A_l=\phi^{-l}(A)=t^{-l}At^l$. This forms an ascending chain $A_0\subset A_1\subset A_2 \subset ...$ and we denote the direct limit of this ascending chain by $N=\varinjlim A_l= \bigcup_{l\in \N}\phi^{-l}(A)$. In fact, $N$ is the normal closure of $A$ in $G$. Since $A$ is abelian, we have $N$ is also abelian.
Further, we have $G=N\rtimes \langle t\rangle$.

Let $H$ be a finitely generated subgroup of $G$ such that $(G,H)$ is a Grothendieck pair. The following lemma tells us the structure of $H$.

\begin{lemma}\label{lemma: H semidirectproduct}
Let $H$ be a finitely generated subgroup of $G$ such that the natural inclusion $i: H \hookrightarrow G$ induces an isomorphism $\widehat{i}: \widehat{H} \rightarrow \widehat{G}$. Then the following holds:
\begin{enumerate}
    \item $H= K \rtimes \langle t_H\rangle$ where $K=H\cap N$, and $t_H$ is an infinite order element in $H$ that acts on $K$ by conjugation. In particular, we have $t_Hat_{H}^{-1}=\phi(a)$ for any $a\in K\cap A$.
    \item $K=\bigcup_{l\in \N}t^{-l}(A\cap K)t^l$.
\end{enumerate}
\end{lemma}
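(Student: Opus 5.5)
The plan is to work with the projection $\pi: G=N\rtimes\langle t\rangle \to \Z$ sending $t\mapsto 1$ and killing $N$, and to restrict it to $H$. Since $N$ is abelian, conjugation by any $n t^m$ acts on $N$ exactly as conjugation by $t^m$, i.e.\ as $\phi^m$ (extended to $N$). Most of the lemma reduces to this observation. The only place the profinite hypothesis enters is in pinning down $\pi(H)$.

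\emph{Step 1: $\pi(H)=\Z$.} First, $\pi(H)\neq 0$. Otherwise $H\le N$ is abelian, so $\widehat{H}$ is abelian. But $\widehat{G}$ contains $\iota_G(G)\cong G$, which is non-abelian since $|\det\mathscr M|>1$ forces $\phi\neq \mathrm{id}$. This contradicts $\widehat{H}\cong\widehat{G}$.

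So $\pi(H)=d\Z$ for some $d\ge 1$. The map $\pi$ extends to a surjection $\widehat{\pi}:\widehat{G}\to\widehat{\Z}$. Its composite with $\widehat{i}$ is the completion of $\pi|_H: H\to d\Z\subset \Z$. This completion has image the closure of $d\Z$ in $\widehat{\Z}$, namely $d\widehat{\Z}$. Since $\widehat{i}$ is surjective, we get $d\widehat{\Z}=\widehat{\Z}$. Because $\widehat{\Z}/d\widehat{\Z}\cong \Z/d$, this forces $d=1$. I expect this to be the main (though mild) obstacle: one has to be careful that the completion of $H\to\Z$ really factors as $\widehat{\pi}\circ\widehat{i}$, which holds by functoriality of profinite completion.

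\emph{Step 2: the splitting in (1).} Pick $t_H\in H$ with $\pi(t_H)=1$, so $t_H=n_0t$ for some $n_0\in N$, and $t_H$ has infinite order. The kernel of $\pi|_H$ is $K=H\cap N$. Since the quotient $H/K\cong\Z$ is free, the sequence splits, giving $H=K\rtimes\langle t_H\rangle$. For $a\in K\cap A$ we compute $t_Hat_H^{-1}=n_0(tat^{-1})n_0^{-1}=n_0\phi(a)n_0^{-1}=\phi(a)$, using that $N$ is abelian. More generally, $t_H^{l}xt_H^{-l}=t^lxt^{-l}$ for every $x\in N$ and $l\in\Z$, by the same argument.

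\emph{Step 3: (2).} For the inclusion $\supseteq$: if $x\in A\cap K$, then $t^{-l}xt^{l}=t_H^{-l}xt_H^{l}\in H$, and it also lies in $N$, so it lies in $K$. For the inclusion $\subseteq$: take $k\in K\subset N=\bigcup_l t^{-l}At^l$. Then $k\in t^{-l}At^l$ for some $l$, so $t^lkt^{-l}\in A$. But $t^lkt^{-l}=t_H^lkt_H^{-l}\in H\cap N=K$. Hence $t^lkt^{-l}\in A\cap K$, and so $k\in t^{-l}(A\cap K)t^l$.
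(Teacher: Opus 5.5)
Your proof is correct and follows essentially the same route as the paper: you project to $\Z$, use the profinite isomorphism to force $\pi(H)=\Z$, split off $t_H=n_0t$, and use that $N$ is abelian (so conjugation by $t_H$ agrees with conjugation by $t$ on $N$) to obtain (2). The only cosmetic difference is in Step 1, where the paper rules out $\pi(H)=m\Z$ with $m\neq 1$ by noting that $\overline{H}$ would lie in a proper closed subgroup of $\widehat{G}$. You instead compare images in $\widehat{\Z}$, and that comparison also disposes of $d=0$, so your abelian-versus-non-abelian argument is correct but unnecessary.
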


\begin{proof}
Let $\pi:G\rightarrow \Z$ be the projection map given by $\pi(N)=0$ and $\pi(t)=1$. Since $N$ is normal in $G$, then we have $\mbox{ker}(\pi)=N$. Under this projection map we have $\pi(H)=m\Z$ for some $m\geq 0$. It follows that $\pi^{-1}(m\Z)=\langle N, t^m\rangle$ is a finite index subgroup of $G$. By \cite[Theorem 1.3.20]{Wilkes24} we have $[\widehat{G}: \overline{\pi^{-1}(m\Z)}]=[G:\pi^{-1}(m\Z)]=m$. We claim that $m=1$. Assume, by contradiction, that $m\neq 1$. Then there are two cases:
\begin{enumerate}
    \item[(i)]Case $m=0$. In this case, we have $H\subseteq N$. Let $\widehat{\pi}: \widehat{G}\rightarrow \widehat{\Z}$ be the unique continuous map induced by $\pi$. Then we have the following commutative diagram:
    \[
    \begin{tikzcd}
& G \arrow[d, "\iota_G"] \arrow[r, "\pi"] & \Z \arrow[d, "\iota_{\Z}"]  \\
& \widehat{G} \ar[r, "\widehat{\pi}"] & \widehat{\Z}
\end{tikzcd}
\]
where $\iota_G,\iota_{\Z}$ are the natural dense embeddings. Since $\pi$ is surjective, the map $\iota_{\Z}\circ \pi=\widehat{\pi}\circ \iota_G$ has dense image in $\widehat{\Z}$. Therefore the image of $\widehat{\pi}$ is dense in $\widehat{\Z}$. It is also compact as $\widehat{\pi}$ is continuous, hence the image of $\widehat{\pi}$ is all of $\widehat{\Z}$, i.e. $\widehat{\pi}$ is surjective. Note that $\overline{N}=\mbox{ker}(\widehat{\pi})$. Since $N$ is a proper subgroup of $G$ and $\overline{N}$ is a proper closed subgroup of $\widehat{G}$. The isomorphism $\widehat{i}: \widehat{H}\cong \widehat{G}$ forces $\overline{H}=\widehat{G}$, so $H$ cannot be contained in the proper closed subgroup $\overline{N}$. Therefore this case cannot happen.
\item[(ii)] Case $m>1$. In this case, we have $H\subseteq \pi^{-1}(m\Z)$. Since $\overline{\pi^{-1}(m\Z)}$ has finite index in $\widehat{G}$, it is a proper open subgroup of $\widehat{G}$. Then $\overline{H}\subseteq \overline{\pi^{-1}(m\Z)}$ is a proper closed subgroup of $\widehat{G}$. Since $\widehat{H}\cong \widehat{G}$ forces $\overline{H}=\widehat{G}$, so $H$ cannot be contained in the proper closed subgroup $\overline{\pi^{-1}(m\Z)}$. Therefore this case cannot happen.
\end{enumerate}
By the above argument, we have $\pi(H)=\Z$. Thus there exists $t_H\in H$ such that $\pi(t_H)=1$. By Britton's lemma, we can write $t_H=tn$ for some $n\in N$. 

Next, we show that $H$ can be written as a semidirect product $H= K \rtimes \langle t_H\rangle$ where $K=H\cap N$. Since $N \lhd G$, we have $K \lhd H$. Since $K=\mbox{ker}(\pi|_H)$ and $t_H\not\in \mbox{ker}(\pi|_H)$, we have $K\cap \langle t_H\rangle=1$. For any element $h\in H$, we have $\pi(h)=r$ for some $r\in \Z$. Define $k=t_{H}^{-r}h$. Then $\pi(k)=\pi(h)-r=0$. This implies that $k\in K$ and $h=t_{H}^rk$. Therefore, we have $H=K\cdot \langle t_H\rangle$. Thus, we have $H= K \rtimes \langle t_H\rangle$. In particular, since $N$ is abelian, we have $t_H a t_H^{-1}=(tn) a (n^{-1}t^{-1})=tat^{-1}=\phi(a)$ for any $a\in K\cap A$. Thus condition (i) holds.

From the above argument, we note that $K$ is normalized by $t$, i.e. $K=t_HKt_H^{-1}=(tn)K(tn)^{-1}=tKt^{-1}$. Now let $k\in K\subseteq N$, we can write $k=t^{-l}at^l$ for some $l\in \N$ and some $a\in A$. Then $a=t^lkt^{-l} \in K$. This implies that $a\in A\cap K$. 
Therefore, we have $K\subseteq \bigcup_{l\in \N}t^{-l}(A\cap K)t^l$. Since $t^{-1}Kt=K$, then we have $K=\bigcup_{l\in \N}t^{-l}(A\cap K)t^l$, and thus condition (2) holds.
\end{proof}

We note that if $(G,H)$ is a Grothendieck pair, then the sets of isomorphism types of finite quotients of $G$ and $H$ are equal (see \cite[Corollary 3.2.4]{Wilkes24}). In the following lemma, we show that if $(G,H)$ is a Grothendieck pair, then every finite quotient map of $G$ restricts to a finite quotient map of $H$.

\begin{lemma}\label{lemma: restriction is surjective}
Let $H$ be a finitely generated subgroup of $G$ such that the natural inclusion $i: H \hookrightarrow G$ induces an isomorphism $\widehat{i}: \widehat{H} \rightarrow \widehat{G}$. Then for every finite quotient map $\pi:G\rightarrow F$, the restriction $\pi|_H: H\rightarrow F$ is surjective.  
\end{lemma}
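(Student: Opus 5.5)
The plan is to use the universal property of profinite completions together with the density of $\iota_H(H)$ in $\widehat{H}$. Let $\pi:G\rightarrow F$ be a surjection onto a finite group. Since $F$ is finite, and hence profinite, $\pi$ extends uniquely to a continuous homomorphism $\widehat{\pi}:\widehat{G}\rightarrow F$ with $\widehat{\pi}\circ\iota_G=\pi$. Because $\iota_G(G)$ is dense in $\widehat{G}$ and $F$ is discrete, $\widehat{\pi}$ is surjective; indeed $\widehat{\pi}(\widehat{G})$ already contains $\pi(G)=F$.

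Next I will use naturality of completion. The inclusion $i:H\hookrightarrow G$ gives the commutative square $\iota_G\circ i=\widehat{i}\circ\iota_H$. Consider the composition $\widehat{\pi}\circ\widehat{i}:\widehat{H}\rightarrow F$. This map is continuous, and it is surjective because $\widehat{i}$ is an isomorphism (of profinite groups, so in particular surjective). Its restriction along $\iota_H$ is
\[
\widehat{\pi}\circ\widehat{i}\circ\iota_H=\widehat{\pi}\circ\iota_G\circ i=\pi\circ i=\pi|_H .
\]
So $\pi|_H(H)=(\widehat{\pi}\circ\widehat{i})(\iota_H(H))$.

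The key step is to show that a continuous surjection $\psi:\widehat{H}\rightarrow F$ onto a finite discrete group still maps the dense subgroup $\iota_H(H)$ onto $F$. The kernel $\ker\psi$ is an open normal subgroup of $\widehat{H}$. Each coset $x\ker\psi$ is a nonempty open set, so it meets the dense subset $\iota_H(H)$. Hence every fibre $\psi^{-1}(f)$ contains an element of $\iota_H(H)$, and $\psi(\iota_H(H))=F$. Applying this with $\psi=\widehat{\pi}\circ\widehat{i}$ gives that $\pi|_H$ is surjective.

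There is no serious obstacle here. The only points to check are that $\widehat{\pi}$ is continuous, so its kernel is open, and that the square $\iota_G\circ i=\widehat{i}\circ\iota_H$ commutes. The second is the defining property of the induced map $\widehat{i}$. Density of $\iota_H(H)$ is the cited \cite[Corollary 1.2.28]{Wilkes24}.
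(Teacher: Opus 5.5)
Your proof is correct and matches the paper's argument step for step. Like the paper, you extend $\pi$ to $\widehat{\pi}$, observe that $\widehat{\pi}\circ\widehat{i}$ is a continuous surjection whose restriction along $\iota_H$ is $\pi|_H$, and conclude from the density of $\iota_H(H)$. Your justification that each coset of the open kernel meets $\iota_H(H)$ simply spells out the paper's remark that the only dense subset of the finite group $F$ is $F$ itself.
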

\begin{proof}
For any finitely generated subgroup $H$ of $G$, we have the following commutative diagram:
\[
\begin{tikzcd}
& H \arrow[d, "\iota_H"] \arrow[r, "i"] & G \arrow[d, "\iota_G"] \arrow [r, "\pi"]&  F \\
& \widehat{H} \arrow[r, "\widehat{i}"] & \widehat{G} \arrow[ru,dashed, "\widehat{\pi}"{xshift=10pt, yshift=-15pt}] &
\end{tikzcd}
\]
where $\iota_G,\iota_H$ are the natural dense embeddings. Consider the finite quotient map $\pi:G\rightarrow F$. By the universal property of profinite completion, $\pi$ induces a continuous map $\widehat{\pi}:\widehat{G}\rightarrow F$ such that $\pi=\widehat{\pi}\circ \iota_G$. Since $\pi$ is surjective, then $\widehat{\pi}$ is also surjective. This implies that the composition map $\widehat{\pi}\circ \widehat{i}: \widehat{H}\rightarrow F$ is also continuous and surjective. Since $\iota_H(H)$ is dense in $\widehat{H}$, and the image $(\widehat{\pi}\circ \widehat{i})(\iota_H(H))$ is dense in $F$. But $F$ is finite, so the only dense subset is the whole set. Therefore, $(\widehat{\pi}\circ \widehat{i})(\iota_H(H))=F$. Since $(\widehat{\pi}\circ \widehat{i})\circ \iota_H=\widehat{\pi}\circ \iota_G \circ i=\pi\circ i=\pi|_H$, then we have $\pi|_H(H)=F$. Thus, the restriction $\pi|_H$ is surjective.
\end{proof}

In the following lemma, we construct a specific finite quotient group of $G$. By studying images of $G$ and $H$ in this specific finite quotient, we are able to conclude that the base group $K$ of $H$ contains $A$.

\begin{lemma}\label{lemma: Q trivial}
If $(G,H)$ is a Grothendieck pair, then $|A: A\cap K|<\infty$.
\end{lemma}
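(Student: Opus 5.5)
The plan is to argue by contradiction: construct a finite quotient of $G$ in which the image of $H$ is a proper subgroup. This is impossible by Lemma \ref{lemma: restriction is surjective}. Throughout, write $B=A\cap K$ and $\bar B$ for its image in $A/pA$.

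First, $B$ is $\phi$-invariant. If $b\in B$, then $\phi(b)=tbt^{-1}=t_Hbt_H^{-1}$ by Lemma \ref{lemma: H semidirectproduct}(1), using that $N$ is abelian. This element lies in $H$, in $N$ and in $A$, hence in $B$. Now suppose $[A:B]=\infty$. Then $\operatorname{rank} B<n$, so $A/B$ has a $\Z$ summand. It follows that $B+pA\neq A$ for every prime $p$, i.e. $\bar B\subsetneq A/pA$.

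Next we build the quotient. Fix a prime $p$ with $p\nmid \det\mathscr{M}$. Then $\bar\phi$, the reduction of $\mathscr{M}$ mod $p$, is an automorphism of $V=A/pA\cong \mathbb{F}_p^n$ of some finite order $r$. Define $F=V\rtimes_{\bar\phi}\Z/r$ and a map $\rho:G\to F$ by sending $a\in A$ to $\bar a$ and $t$ to the generator of $\Z/r$. The defining relation $tat^{-1}=\phi(a)$ holds in $F$, so $\rho$ is a homomorphism. It is surjective, so $F$ is a finite quotient of $G$.

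On $N$, $\rho(t^{-l}at^{l})=\bar\phi^{-l}(\bar a)$, and this is well defined because $\bar\phi$ is invertible. By Lemma \ref{lemma: H semidirectproduct}(2), $K=\bigcup_l t^{-l}Bt^l$, so $\rho(K)=\bigcup_l\bar\phi^{-l}(\bar B)$. Since $\bar B$ is $\bar\phi$-invariant and $\bar\phi$ is a bijection of the finite group $V$, we have $\bar\phi^{-1}(\bar B)=\bar B$, and therefore $\rho(K)=\bar B$. Since $H=K\rtimes\langle t_H\rangle$, the image satisfies $\rho(H)=\bar B\cdot\langle\rho(t_H)\rangle$. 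Hence $|\rho(H)|\le |\bar B|\cdot r<p^n r=|F|$, so $\rho|_H$ is not surjective. This contradicts Lemma \ref{lemma: restriction is surjective}, and therefore $[A:A\cap K]<\infty$.

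The main point needing care is that $\rho$ is genuinely defined on all of $N$, not just on $A$. This is exactly why $p$ is chosen coprime to $\det\mathscr{M}$. The other delicate step is that $\rho(K)$ stays inside $\bar B$, which uses both the $\phi$-invariance of $B$ and Lemma \ref{lemma: H semidirectproduct}(2).
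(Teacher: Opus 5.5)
There is a genuine gap in your final count, $|\rho(H)|\le|\bar B|\cdot r$. It tacitly assumes that $\rho(t_H)$ has order at most $r$, and this can fail. We have $\rho(t_H)=\rho(t)\rho(n)=(\bar v,1)$ for some $\bar v\in V$, and $(\bar v,1)^r=\bigl(\sum_{i=0}^{r-1}\bar\phi^{i}(\bar v),0\bigr)$ can be a nonzero element of $V$ lying outside $\bar B$. In that case $\rho(t_H)$ has order $pr$, not $r$.

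This breaks the argument itself, not just the bound. Take $n=2$, $\mathscr M=\mathrm{diag}(1,2)$ and $H=\langle ta_1,a_2\rangle$, and write $e_i$ for the image of $a_i$ in $V$. Then $t_H=ta_1$, $K$ is the copy of $\Z[1/2]$ spanned by the $t^{-l}a_2t^{l}$, and $B=\langle a_2\rangle$ has infinite index in $A$. All the structure you use from Lemma~\ref{lemma: H semidirectproduct} holds here. The admissible primes are the odd ones, and for each such $p$, $r$ is the order of $2$ in $\mathbb{F}_p^{\times}$, so $p\nmid r$. Hence $\rho(t_H)^r=(re_1,0)\notin\bar B$, so $\rho(H)\supseteq\bar B+\mathbb{F}_pe_1=V$. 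Since also $\rho(t)=\rho(t_H)\rho(a_1)^{-1}\in\rho(H)$, we get $\rho(H)=F$. In general, your quotient $V\rtimes\Z/r\Z$ cannot detect the case where $V/\bar B\cong\Z/p\Z$, $\bar\phi$ acts trivially on it, and $p\nmid r$, because then $F/\bar B\cong\Z/p\Z\times\Z/r\Z$ is cyclic.

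The missing idea is to make the cyclic factor have order divisible by $p$. This is exactly why the paper uses $F=(A/pA)\rtimes_{\bar\phi}\Z/pr\Z$. There $(\bar v,1)^{pr}=\bigl(p\sum_{i=0}^{r-1}\bar\phi^{i}(\bar v),0\bigr)=0$, so $\langle\rho(t_H)\rangle$ has order exactly $pr$. Your count then becomes $|\rho(H)|\le|\bar B|\cdot pr<p^{n}\cdot pr=|F|$, which gives the contradiction.

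The paper phrases the same step differently. It argues that $F/\bar B$ is cyclic, hence abelian, hence equal to $(V/\bar B)\times\Z/pr\Z$, which contains $(\Z/p\Z)^2$ unless $\bar B=V$. It then concludes $A=(A\cap K)+pA$, so $Q=pQ$ and $Q$ is finite, rather than arguing by contradiction from a free summand.

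The rest of your proposal is correct: the $\phi$-invariance of $B$, $\rho(K)=\bar B$, and $\bar B\ne V$ when $[A:B]=\infty$. The well-definedness of $\rho$ on $N$ that you flag as delicate is automatic, since $\rho$ is defined as a homomorphism on all of $G$ via the presentation.
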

\begin{proof} First, we construct a specific finite quotient of $G$ as follows. Recall that $G=A\rtimes_{\phi}\langle t\rangle$ where $A=\Z^n$ and $\phi:A\rightarrow A$ is an injective homomorphism given by left multiplication by a matrix $\mathscr{M}\in M_{n\times n}(\Z)$.
Let $p$ be a prime number with $p\nmid \mbox{det}(\mathscr{M})$. The map $\phi$ induces an automorphism $\bar{\phi}:A/pA\rightarrow A/pA$ given by left multiplication by the matrix $\overline{\mathscr{M}}\in \mbox{GL}(n,\Z/p\Z)$, where the $(i,j)$-entry of $\overline{\mathscr{M}}$ is the $(i,j)$-entry of $\mathscr{M}$ modulo $p$. Since $\mbox{GL}(n,\Z/p\Z)$ is finite, $\overline{\mathscr{M}}$ has finite order. We denote by $r$ the order of $\overline{\mathscr{M}}$, which is also the order of $\bar{\phi}$. Choose $m=pr$. Now we define the finite quotient group $F=(A/pA)\rtimes_{\bar{\phi}}(\Z/m\Z)$, and define a surjective homomorphism $\pi: G\rightarrow F$ given by $\pi(a)= (a \ \mbox{mod}\ p, 0)$ and $\pi(t)=(0,1)$. Since $r\mid m$, we have $\bar\phi^m=\mathrm{id}_{A/pA}$, so the action of $\Z/m\Z$ on $A/pA$ is well-defined. Moreover, since $\pi(t a t^{-1}) = (0,1)(a,0)(0,-1) = (\bar{\phi}(a), 0) = \pi(\phi(a))$, $\pi$ is a well-defined homomorphism.

Next, we study the images of $G$ and $H$ in the finite quotient group $F$. Since $(G,H)$ is a Grothendieck pair, by Lemma \ref{lemma: restriction is surjective}, we have $\pi(H)=F$. Recall that $H=\langle K, tn\rangle$. Then we have $F=\langle \pi(K), \pi(tn)\rangle$. Since $K$ is normalized by $t$ in $G$ and $K=\bigcup_{l\in \N}t^{-l}(A\cap K)t^l$, we have $\pi(K)=\bigcup_{l\in \N}\pi(t)^{-l}\pi(A\cap K)\pi(t)^l=\bigcup_{l\in \N}\bar{\phi}^{-l}(\pi(A\cap K))$. Notice that $t (A \cap K) t^{-1} \subseteq t A t^{-1} \cap t K t^{-1} \subseteq A \cap K$. This implies that $\bar{\phi}(\pi (A\cap K))=\pi(\phi (A\cap K))\subseteq \pi(A\cap K)$. Since $\pi(A \cap K)$ is a finite set and $\bar{\phi}$ is injective, its restriction to $\pi(A \cap K)$ is a bijection from $\pi(A \cap K)$ onto itself. This means that $\pi(K) = \pi(A \cap K)$. We denote this subgroup by $S\subseteq A/pA$.

Since $F=\langle S, \pi(tn)\rangle$ and $S$ is normal in $F$, the quotient $F/S$ is generated by the single element $\pi(tn)S$. In other words, $F/S$ is a cyclic group. However, $F/S\cong ((A/pA)/S)\rtimes (\Z/m\Z)$ because $S$ is invariant under the action of $\bar{\phi}$. Since $F/S$ is cyclic, it must be abelian. Therefore, $F/S\cong ((A/pA)/S)\times (\Z/m\Z)$.

Now we claim that $(A/pA)/S)$ must be trivial. Assume, for contradiction, that $(A/pA)/S\neq0$. Notice that every element in the quotient $(A/pA)/S\neq0$ must have order 1 or $p$. Since $(A/pA)/S\neq0$, it contains a subgroup isomorphic to $\Z/p\Z$. Recall that $m=pr$. Then the cyclic group $F/S$ contains a subgroup isomorphic to $(\Z/p\Z) \times (\Z/p\Z)$, which is impossible.

Since $(A/pA)/S=0$, then we have $\pi(A\cap K)=A/pA$. This implies that $A=(A\cap K)+pA$. Let $Q = A / (A \cap K)$. We have shown that $Q = pQ$. Since $Q$ is a finitely generated abelian group, then $Q \cong \mathbb{Z}^q \oplus T$. The equation $Q = pQ$ shows that its free rank $q$ must be 0. Thus $Q \cong T$ is finite, proving $|A : A \cap K| <\infty$.

\end{proof}

\begin{prop}\label{prop: HNN GR}
Let $G$ be a strictly ascending HNN-extension of $\Z^n$. Then $G$ is Grothendieck rigid.
\end{prop}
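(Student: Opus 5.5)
The plan is to show that any subgroup $H$ with $(G,H)$ a Grothendieck pair has finite index in $G$, and then that the index must be $1$. So suppose $H\le G$ is finitely generated (as we may, since $H\cong$ a group with $\widehat{H}\cong\widehat{G}$ is assumed finitely generated in our convention) and that the inclusion induces $\widehat{H}\cong\widehat{G}$. By Lemma \ref{lemma: H semidirectproduct}, $H=K\rtimes\langle t_H\rangle$ with $K=H\cap N$, $t_H=tn$ for some $n\in N$, and $tKt^{-1}=K$. By Lemma \ref{lemma: Q trivial}, the group $Q=A/(A\cap K)$ is finite.

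The key step is to show that $N/K$ is finite, in fact isomorphic to $Q$. Write $N=\bigcup_l A_l$ with $A_l=t^{-l}At^l$. Then $N/K$ is the direct limit of the groups $A_l/(A_l\cap K)$ under the maps induced by the inclusions $A_l\subseteq A_{l+1}$.
\begin{enumerate}
\item Each transition map $A_l/(A_l\cap K)\to A_{l+1}/(A_{l+1}\cap K)$ is injective, because $A_l\cap(A_{l+1}\cap K)=A_l\cap K$.
\item Conjugation by $t^l$ carries $A_l$ onto $A$ and $K$ onto $K$, since $t^lKt^{-l}=K$. Hence $A_l/(A_l\cap K)\cong A/(A\cap K)=Q$ for every $l$.
\end{enumerate}
So the direct system consists of injections between finite groups of the same order $|Q|$. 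All these maps are therefore bijections, and $N/K\cong Q$ is finite. This is the step I expect to need the most care, namely checking that conjugation by $t$ really preserves $K$ and intersects correctly with the $A_l$; both facts are recorded in the proof of Lemma \ref{lemma: H semidirectproduct}.

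Next I would pass from $[N:K]$ to $[G:H]$. Let $\pi:G\to\Z$ be the projection used in Lemma \ref{lemma: H semidirectproduct}. Since $\pi(H)=\Z$ and $H\cap N=K$, the natural map $N/K\to G/H$ is a bijection. Hence $[G:H]=[N:K]<\infty$.

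Finally, for a finite index subgroup $H$ of $G$, \cite[Theorem 1.3.20]{Wilkes24} gives $[\widehat{G}:\overline{H}]=[G:H]$. The isomorphism $\widehat{i}$ forces $\overline{H}=\widehat{G}$, so $[G:H]=1$ and $H=G$. Thus no proper subgroup $H$ makes $(G,H)$ a Grothendieck pair, and $G$ is Grothendieck rigid.
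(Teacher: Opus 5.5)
Your argument is correct, but after the common starting point (Lemmas \ref{lemma: H semidirectproduct} and \ref{lemma: Q trivial}) it takes a different route from the paper.

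The paper first shows that $\bar\phi$ is injective on the finite group $Q$, hence an automorphism of some order $r$. It then uses a second family of explicit quotients $\psi_s\colon G\to Q\rtimes_{\bar\phi}\Z/sr\Z$. Since $K$ dies in these quotients, Lemma \ref{lemma: restriction is surjective} forces them to be cyclic. Cyclicity first kills the action ($r=1$), and then, taking $s$ a prime dividing $|Q|$, kills $Q$. From $A\subseteq K$ the paper deduces $K=N$, $t\in H$, and finally $H=G$.

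Your direct-limit step uses the same underlying fact, but spends it differently. Identify each $A_l/(A_l\cap K)$ with $Q$ by conjugation by $t^l$. Under these identifications the inclusion $A_l\hookrightarrow A_{l+1}$ becomes $\bar\phi$. So your observation that injections between groups of order $|Q|$ are bijections is exactly the paper's observation that the injective map $\bar\phi$ is bijective. You use it to get $N=AK$, hence $[G:H]=[N:K]=|Q|<\infty$ via $G=NH$. You then finish with the general principle that a finite-index subgroup with $\overline{H}=\widehat{G}$ must be all of $G$.

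This bypasses the second construction of quotients and the cyclicity and prime-counting argument entirely. It also isolates the conceptual point: a Grothendieck pair can never have finite index. So once Lemma \ref{lemma: Q trivial} supplies the real content, all that remains is to show $[G:H]<\infty$.

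The paper's route is more explicit, exhibiting $N\subseteq H$ and $t\in H$ by hand. Yours is shorter, and it shows along the way that $[G:H]=|A:A\cap K|$ before concluding that both equal $1$.
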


\begin{proof}
Let $H$ be a finitely generated subgroup of $G$ such that $(G,H)$ be a Grothendieck pair. By Lemma \ref{lemma: H semidirectproduct}, we have $H=K\rtimes \langle t_H\rangle$. Further, by Lemma \ref{lemma: Q trivial}, we know that $|A:A\cap K|<\infty$. Let $Q=A/(A\cap K)$ be the finite quotient group. Since $A\cap K$ is $\phi$-invariant, then the map $\phi$ induces a well-defined map on the quotient group $\bar{\phi}: Q \rightarrow Q$ given by $a+(A\cap K)\mapsto \phi(a)+(A\cap K)$. Notice that, for any $a\in A$ and $\phi(a)\in A\cap K$, we have $tat^{-1}\in K$. Since $K$ is normalized by $t$ in $G$, we have $a\in t^{-1}Kt=K$. Therefore $a\in A\cap K$. This shows that $\mbox{Ker}(\bar{\phi})$ is trivial and hence $\bar{\phi}$ is injective. Since $Q$ is finite, $\bar{\phi}$ is an automorphism of $Q$. We denote by $r$ the finite order of $\bar{\phi}$.

For any integer $s\geq 1$, let $m=sr$. We can define a finite group $Q \rtimes_{\bar{\phi}} \Z/m\Z$ and the finite quotient map $\psi_s : G \to Q \rtimes_{\bar{\phi}} \Z/m\Z$ given by $\psi_s(a) = (\bar{a}, 0)$ and $\psi_s(t) = (0, 1)$. One can check that $\psi_s$ is a well-defined homomorphism. By Lemma \ref{lemma: restriction is surjective}, $\psi_s\vert{}_H$ must be surjective. Since $\psi_s(A\cap K)=(0,0)$, by (2) of Lemma \ref{lemma: H semidirectproduct}, we have $\psi_s(K)=(0,0)$. Recall that $H = \langle K, tn \rangle$. Then the image $\psi_s(H)$ is generated by the single element $\psi_s(tn)$. Therefore, for any $s\geq 1$, the target group $Q \rtimes_{\bar{\phi}} \Z/sr\Z$ is a cyclic group. Since it is cyclic, it must be abelian, meaning that $\bar{\phi}$ acts trivially on $Q$. Therefore, the order $r = 1$ and the group is the direct product $Q \times \Z/s\Z$.

Now we claim that $Q$ must be trivial. Assume, for contradiction, that $Q$ is not trivial. Let $q$ be a prime dividing $|Q|$ and let $s = q$. The finite quotient group is $Q \times \mathbb{Z}/q\mathbb{Z}$. Since $q$ divides $|Q|$, $Q$ has a subgroup isomorphic to $\Z/q\Z$. This means the cyclic group $Q \times \mathbb{Z}/q\mathbb{Z}$ has a subgroup isomorphic to $\Z/q\Z\times \Z/q\Z$, which is impossible.

Since $Q$ is trivial, we have $A=A\cap K$, meaning that $A\subseteq K$. Since $t$ normalizes $K$ and $A \subseteq K$, we have $N = \bigcup t^{-l} A t^l \subseteq K$. Since $K = H \cap N \subseteq N$, this forces that $K = N$. Recall $t_H = t n \in H$. Because $n \in N = K \subseteq H$, we have $t = t_H n^{-1} \in H$. Since $H$ contains both $N$ and $t$, we have $H = \langle N, t \rangle = G$.

\end{proof}

\begin{proof}[Proof of Theorem \ref{thm: Main theorem 1}]
Let $G$ be a residually finite $GBS_n$ group. By \cite[Theorem 3.1, 4.2]{GBSn}, $G$ is either a strictly ascending HNN-extension of $\Z^n$ or LERF. Then Grothendieck rigidity of $G$ follows directly from Proposition $\ref{prop: HNN GR}$ and \cite[Corollary 2.3]{Sun23}.
\end{proof}

\section{Property (VRC)}
In this section, we study property (VRC) for $GBS_n$ groups. First, we provide some basic notions about $GBS_n$ groups, more details can be found in \cite{J25}.

\begin{defn}\label{defn: modular homomorphism}
Let $G$ be a $GBS_n$ group for $n\geq 1$. Fix a base vertex $v_0$ in the tree $T$ on which $G$ acts, a finite index subgroup $A\cong \Z^n$ of $\stab_G(v_0)\cong \Z^n$ and an ordered basis $a_1,...,a_n$ of $A$. The \textit{modular homomorphism} $\mathcal{M}:G\rightarrow GL(n,\Q)$ is defined as follows: for any $g\in G$, the subgroups $A$ and $g^{-1}Ag$ are commensurable. There is $m\in \mathbb{N}$ such that
$$a_j^m\in A\cap g^{-1}Ag \ \ \mbox{for all}\ j=1,...,n$$
Then $ga_j^mg^{-1}\in A$, so we may write 
$$ga_j^mg^{-1}=a_1^{g_{1j}}\cdots a_n^{g_{nj}}$$
and define $\mathcal{M}(g)$ to be the matrix whose $(i,j)$-entry is $g_{ij}/m\in \Q$. This is independent of the choice of $m$, and $\mathcal{M}$ is a group homomorphism. The \textit{monodromy} of a $GBS_n$ group $G$ is the image $\mathcal{M}(G)$.
\end{defn}

\begin{rmk}
We note that changing the choices of the base vertex $v_0$, the base $\Z^n$-subgroup and the ordered basis of $A$ result in a modular homomorphism $\mathcal{M'}$ that is conjugate to $\mathcal{M}$ in $GL(n,\Q)$. However, the property of $\mathcal{M}(G)$ being finite is invariant under conjugation. Therefore, all results concerning finite monodromy is independent of the choices made in defining $\mathcal{M}$.
\end{rmk}

\begin{proof}[Proof of Theorem \ref{thm: Main theorem 2}]
Assume that $G$ has (VRC). We fix a base vertex $v_0$ in the tree $T$. Let $A=\mbox{Stab}_G(v_0)$ be the base $\Z^n$-subgroup and let $a_1,...,a_n$ be an ordered basis of $A$. Denote $N=\cap_{v\in V(T)}\mbox{Stab}_G(v)$. We first claim that $N$ is a normal subgroup in $G$ that is isomorphic to $\Z^n$. Since $G$ has (VRC), by \cite[Lemma 2.3]{Min21} $G$ is residually finite. It follows from \cite[Theorem 3.1, 4.2]{GBSn} that $G$ is either a strict ascending HNN extension of $\Z^n$ or LERF. However, a strict ascending HNN extension of $\Z^n$ cannot have (VRC), see \cite[Remark 9.3 (a)]{Min21}. Thus, $G$ is LERF. Since $G$ is the fundamental group of
a finite graph of groups where all of the finitely many edge and vertex groups are commensurable, by \cite[Theorem 2]{RV96}, we have $\cap_{g\in G}gG_xg^{-1}$ has finite index in $G_x$ for any vertex group $G_x$. Therefore $N=\cap_x(\cap_{g\in G} gG_xg^{-1})$ has finite index in $G_x$ and $N \cong \Z^n$. The normality of $N$ follows from the fact that the action of $G$ on $T$ induces an homomorphism $f:G\rightarrow \mbox{Aut}(T)$ and $N=\mbox{ker}(f)$. 

Next, since $N\lhd G$, then the action of $G$ on $N$ by conjugation gives rise to a homomorphism $\psi:G\rightarrow \mbox{Out}(N)$.  By \cite[Lemma 5.1, Proposition 1.5]{Min21}, we have $N$ is a virtual retract of $G$. Moreover, by \cite[Lemma 4.5]{UM251}, we have $\psi(G)$ is finite. This implies that there are only finitely many automorphisms $\alpha_g : N \rightarrow N$ arising as conjugation by $g\in G$. Since $N$ has finite index in $A$, then there exists $m\in \N$ such that $a_j^m\in N$. Further, since $N\lhd G$, then $ga_j^mg^{-1}=\alpha_g(a_j^m)\in N$. Since there are only finitely many $\alpha_g$, each $g_{ij}$ has only finitely many choices. Therefore the rational matrices $\mathcal{M}(g)$ range over a finite subset of $GL(n,\Q)$. Thus we have $\mathcal{M}(G)$ is finite.

Conversely, suppose that $G$ has finite monodromy. It follows from \cite[Theorem 3.4]{J25} that $G$ is virtually $\Z^n\times F_r$ where $F_r$ is a free group of rank $r\geq 0$. Since $\Z^n\times F_r$ has (VRC) and (VRC) is stable under passing to finite index supergroups (\cite[Lemma 5.2]{Min21}), then $G$ has (VRC).
\end{proof}

\begin{ex}
The Leary-Minasyan group given by the finite presentation
$$L=\langle t,a,b \ |\ [a,b]=1, ta^2b^{-1}t^{-1}=a^2b,\ tab^2t^{-1}=a^{-1}b^2\rangle$$
is a $GBS_2$ group that does not have (VRC) because it has infinite monodromy (\cite[Corollary 5.4]{J25}).
\end{ex}

\subsection*{Acknowledgements}
The author is supported by the National Natural Science Foundation of China (Grant No. 12501027). The author would like to thank Ashot Minasyan for many helpful discussions on property (VRC) for $GBS_n$ groups.

\bibliographystyle{alpha}
\bibliography{citations}

\end{document}